\newtheorem{theorem}{Theorem}
\newtheorem{lemma}[theorem]{Lemma}
\newtheorem{observation}{Observation}
\author
{
Anders Martinsson
}
\author
{
Raphael Steiner 
}
\thanks{Department of Computer Science, Institute of Theoretical Computer Science, ETH Z\"{u}rich, Switzerland.  \texttt{\{anders.martinsson,raphaelmario.steiner\}@inf.ethz.ch}. Research of R.S. funded by SNSF Ambizione grant No. 216071.}
\date{\today}
\title{Vertex-critical graphs far from edge-criticality}
\begin{document}
\maketitle

\begin{abstract}
Let $r$ be any positive integer. We prove that for every sufficiently large $k$ there exists a $k$-chromatic vertex-critical graph $G$ such that $\chi(G-R)=k$ for every set $R \subseteq E(G)$ with $|R|\le r$. 
This partially solves a problem posed by Erd\H{o}s in 1985, who asked whether the above statement holds for $k \ge 4$.
\end{abstract}

\section{Introduction}

The chromatic number $\chi(G)$ of a graph $G$ is among the oldest and most fundamental graph parameters, but despite its intensive study by researchers across the field for more than a century, many fundamental open problems remain. In many instances, we would like to show that for some number $k$, all graphs in an infinite class $\mathcal{G}$ of graphs have chromatic number less than $k$. Often times, the graph class $\mathcal{G}$ at hand will also have the property that it is closed under taking induced, or even arbitrary, subgraphs. In this case, a central idea for bounding the chromatic number is to consider the \emph{minimal} graphs in $\mathcal{G}$ with chromatic number $k$. These graphs have the special property that removing any vertex (if $\mathcal{G}$ is closed under induced subgraphs) or any edge (if $G$ is closed under subgraphs) reduces the chromatic number from $k$ to $k-1$. This enforces many constraints on such minimal graphs, for instance sufficiently high minimum degree and edge-connectivity, among others. Such properties can then prove useful when showing the non-existence of minimal $k$-chromatic graphs in $\mathcal{G}$, which in turn establishes that the chromatic number of graphs in $\mathcal{G}$ is less than $k$. 

Because of this and many other applications, the notion of \emph{color-critical graphs} has emerged. Given an integer $k$, a graph $G$ is called \emph{$k$-chromatic vertex-critical} if $\chi(G)=k$, but $\chi(G-v)=k-1$ for every $v \in V(G)$. Similarly, it is called \emph{$k$-chromatic edge-critical}, if $\chi(G)=k$ but $\chi(G-e)=k$ for every $e \in E(G)$. Note that edge-criticality implies vertex-criticality if we exclude redundant cases in which $G$ has isolated vertices. 

A considerable amount of effort has been put into understanding how different the notions of vertex-criticality and edge-criticality can be. Already in 1970, G.~Dirac~\cite{lattanzio} conjectured that for every integer $k \ge 4$, there exists a $k$-chromatic vertex-critical graph $G$ which at the same time is very much not edge-critical, in the sense that the deletion of any single edge does \emph{not} lower its chromatic number. In the following, let us say that such a graph \emph{has no critical edges}. Dirac's problem for a long time remained poorly understood. It was not before 1992 that Brown~\cite{brown} finally found a first construction of some vertex-critical graph with no critical edges, in fact, he found such a construction for $k=5$. Later, in 2002, Lattanzio~\cite{lattanzio} found a more general construction which proved Dirac's conjecture for every integer $k \ge 5$ such that $k-1$ is not a prime number. Shortly after, Jensen~\cite{jensen} provided a construction of $k$-chromatic vertex-critical graphs with no critical edges for every $k \ge 5$. This leaves only the case $k=4$ of Dirac's conjecture open today, which remains an intriguing open problem.
A wide-ranging strengthening of Dirac's conjecture was proposed by Erd\H{o}s in 1985~\cite{erdos}, as follows. 

\medskip
\say{\textit{I recently heard from Toft the following conjecture of Dirac: Is it true that for every $k \ge 4$ there is a $k$-chromatic vertex-critical graph which remains $k$-chromatic if any of its edges is omitted. If the answer as expected is yes, then one could ask whether it is true that for every $k \ge 4$ and $r$ there is a vertex-critical $k$-chromatic graph which remains $k$-chromatic if any $r$ of its edges are omitted.}} 
\begin{center}(Paul Erd\H{o}s, 1985, top of page~113 in \cite{erdos})\end{center}

This problem is also mentioned in several other sources, for instance it is listed as Problem~5.14 in the book~\cite{jensentoft} by Jensen and Toft and on page 66 in Chapter 4 of the Erd\H{o}s open problem collection by Chung and Graham~\cite{chungraham}, see also the online version of the problem~\cite{chungrahamonline}. 

The question of Erd\H{o}s can be rephrased as asking whether for arbitrarily large numbers $r$ there exist $k$-chromatic vertex-critical graphs for $k \ge 4$ that are ``pretty far'' from any of their $(k-1)$-chromatic spanning subgraphs, in the sense that one has to remove more than $r$ edges to reach any such subgraph. As described above, the case $r=1$ of this problem is well-understood, however, not much seems to be known beyond that, when $r\ge 2$.
\medskip
\paragraph{\textbf{Our contribution.}} In this paper, we resolve the problem by Erd\H{o}s for any value $r$ and all sufficiently large values $k$. To the best of our knowledge, these are the first known examples of such graphs for arbitrarily large values of $r$.

\begin{theorem}\label{thm:main}
For every $r \in \mathbb{N}$ there is some $k_0\in \mathbb{N}$ such that for every $k \ge k_0$ there exists a $k$-chromatic vertex-critical graph $G$ such that $\chi(G-R)=k$ for every $R \subseteq E(G)$ with $|R|\le r$.
\end{theorem}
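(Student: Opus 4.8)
\emph{Reformulation.} Deleting edges cannot raise the chromatic number, so the requirement ``$\chi(G-R)=k$ for all $R$ with $|R|\le r$'' says precisely that no set of $r$ edges of $G$ is a transversal of the family of $k$-chromatic subgraphs of $G$. Two observations sharpen this. First, if $G$ is $k$-vertex-critical then every $k$-chromatic subgraph $H\subseteq G$ is \emph{spanning} and is itself $k$-vertex-critical: if $v\notin V(H)$ then $\chi(G-v)\ge\chi(H)=k$, a contradiction; and if $H$ is spanning then $\chi(H-v)\le\chi(G-v)=k-1$ for every $v$. Second, a clean sufficient condition for the transversal requirement is that $G$ contain $r+1$ pairwise edge-disjoint $k$-chromatic subgraphs (these are then automatically spanning and $k$-vertex-critical). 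So the goal becomes: find a $k$-vertex-critical graph $G$ that is an edge-disjoint union (possibly together with extra edges) of $r+1$ $k$-vertex-critical graphs $H_1,\dots,H_{r+1}$ on one common vertex set --- i.e.\ $r+1$ copies of a critical configuration sharing \emph{all} vertices but \emph{no} edges. It is worth stressing that the robustness property alone is cheap: the balanced complete $k$-partite graph $K_{t,\dots,t}$ carries $\Theta(t^2)$ edge-disjoint copies of $K_k$ (for instance via Reed--Solomon codes), so already $K_{t,\dots,t}$ cannot be made $(k-1)$-colourable by deleting few edges. The whole difficulty is to achieve this \emph{simultaneously} with vertex-criticality, and the two demands pull in opposite directions --- robustness wants many redundant copies of the colouring obstruction, vertex-criticality forbids any vertex from being redundant.

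\emph{Plan.} Accordingly, I would construct $G$ as an overlay of $r+1$ edge-disjoint $k$-vertex-critical graphs $H_1,\dots,H_{r+1}$ on a common vertex set, built so that they are highly ``aligned'' --- for instance as images of a single configuration under a suitable group action, or by an explicit surgery of Haj\'os type. The gluing must \emph{not} be a join: one checks that joining any two vertex-critical graphs always produces a critical edge, since one can colour each factor optimally with a singleton last colour class and then let the two last colours coincide, leaving a unique monochromatic edge --- so no join-based construction is even $1$-robust. A Haj\'os-type attachment, by contrast, preserves $k$-vertex-criticality and allows one to control how the packed critical subgraphs interact. Once a good family $\{H_i\}$ is in hand, robustness is immediate from edge-disjointness and $\chi(G)=k$ will follow once the $H_i$ are jointly $k$-colourable.

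\emph{Main obstacle.} The heart of the argument is producing the family $\{H_i\}$ with the right compatibility. Two things must hold, and the second is the hard one: (i) the $H_i$ must be jointly $k$-colourable, so that $\chi(G)=k$ and not larger; and (ii) for every vertex $v$, the graphs $H_1-v,\dots,H_{r+1}-v$ must have a \emph{common} proper $(k-1)$-colouring --- each is individually $(k-1)$-colourable because each $H_i$ is vertex-critical, but extracting one colouring that works for all of them at once is what makes $G$ vertex-critical, and this forces the $H_i$ to be rigidly aligned. Engineering graphs that are at once edge-disjoint, ``independent enough'' for (i), and ``aligned enough'' for (ii) is the crux; this is precisely where the assumption that $k$ is large is meant to be spent --- it provides the room (a large alphabet / many coordinates) needed for so many mutually compatible yet edge-disjoint critical copies to coexist. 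After that, vertex-criticality comes from (i) and (ii), robustness from edge-disjointness, and $\chi(G)=k$ from (i).
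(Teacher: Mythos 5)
Your reformulation is sound: if $G$ is $k$-vertex-critical and contains $r+1$ pairwise edge-disjoint $k$-chromatic subgraphs, then removing any $r$ edges leaves one of them intact and the theorem follows. But the proposal stops exactly where the proof would have to begin. You explicitly defer the construction of the compatible family $\{H_i\}$ --- ``Engineering graphs that are at once edge-disjoint, `independent enough' for (i), and `aligned enough' for (ii) is the crux; this is precisely where the assumption that $k$ is large is meant to be spent'' --- but you never spend it. No candidate $H_i$ is exhibited, no mechanism is given for ensuring the simultaneous $(k-1)$-colourability of $H_1-v,\dots,H_{r+1}-v$ for every $v$, and no argument is made that Haj\'os-type surgery (or any other tool) can produce the required alignment. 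As it stands, this is a correct reduction to a harder-looking packing problem, not a proof.

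It may be worth knowing that the paper does not go through edge-disjoint $k$-chromatic subgraphs at all; it avoids the alignment problem you identify rather than solving it. Set $s=r+3$ and, using the Johansson--Kahn--Vu resolution of Shamir's problem, take a random $s$-uniform hypergraph $H$ on $n=s(k-1)+1$ vertices that (a) has a perfect matching in $H-v$ for every vertex $v$, and (b) is locally sparse in the sense that any $f$ hyperedges (for $f$ up to $2^{s+1}$) span at least $(s-1)f$ vertices. Let $G$ be the complement of the $2$-section of $H$. Vertex-criticality is immediate from (a): each $H-v$ has a perfect matching by $s$-sets, each such $s$-set is a hyperedge, hence a clique in the $2$-section, hence an independent set in $G$, giving a proper $(k-1)$-colouring of $G-v$. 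The robustness $\chi(G-R)\geq k$ for $|R|\leq r$ comes not from packing critical subgraphs but from bounding the independence number: (b) is used to show (via a small structural lemma on the $2$-section) that every $(s+1)$-subset of $V(G)$ spans at most $\binom{s}{2}+2$ edges in the $2$-section, equivalently at least $s-2>r$ edges in $G$, so no $(s+1)$-set can become independent after deleting $r$ edges; thus $\alpha(G-R)\leq s$ and $\chi(G-R)\geq n/s>k-1$. This ``control the independence number'' route is what lets one get both demands simultaneously, and it is precisely the ingredient your plan is missing.
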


Our result still leaves open Erd\H{o}s' question when $k\ge 4$ is fixed as a small value and $r$ tends to infinity, and this remains an interesting open case of the problem. The rest of this note is devoted to presenting our proof of Theorem~\ref{thm:main}. The main idea of the construction is to use the existence of uniform hypergraphs that admit a perfect matching upon the removal of any single vertex, but at the same time are locally rather sparse. Such hypergraphs in turn can be constructed randomly, using the recent advances on Shamir's hypergraph matching problem.
\newline\noindent 
\paragraph{\textbf{Notation.}}  For a graph $G$ and a subset $X\subseteq V(G)$ of its vertices, $G[X]$ denotes the subgraph of $G$ induced by $X$. A \emph{hypergraph} is a tuple $(V,E)$ where $V$ is a finite set and $E \subseteq 2^V\setminus \{\emptyset\}$. Given a hypergraph $H=(V,E)$, we denote by $V(H)=V$ its vertex- and by $E(H)=E$ its hyperedge-set. For $v \in V(H)$, we denote by $H-v$ the hypergraph with vertex-set $V(H)\setminus \{v\}$ and hyperedge-set $\{e\in E(H)|v \notin e\}$. For $e \in E(H)$, $H-e:=(V(H),E(H)\setminus \{e\})$ is the hypergraph obtained by omitting~$e$. Given a hypergraph $H$, its \emph{$2$-section} is the graph $G_2^H$ on the same vertex-set $V$ and where $uv \in E(G_2^H)$ if and only if there is some $e \in E(H)$ with $u,v \in e$. 

\section{Proof of Theorem~\ref{thm:main}}

In the following, given positive integers $n,k$ and a probability value $p \in [0,1]$, we denote by $\mathcal{H}_s(n,p)$ the binomial $s$-uniform random hypergraph on vertex-set $V=[n]=\{1,\ldots,n\}$, obtained by including every $s$-subset of $V$ as a hyperedge independently with probability $p$. Given a hypergraph $H$, a \emph{perfect matching} of $H$ is a collection $\{e_1,\ldots,e_t\}\subseteq E(H)$ of hyperedges that form a set-partition of $V(H)$. Note that if $H$ is an $s$-uniform hypergraph, then the existence of a perfect matching necessitates $|V(H)|\equiv 0 \text{ }(\text{mod }s)$. One of the most famous problems in probabilistic graph theory for a long  time was \emph{Shamir's problem}, that asked to determine the threshold for the random hypergraph $\mathcal{H}_s(n,p)$ with $n \equiv 0 \text{ }(\text{mod }s)$ to contain a perfect matching. This threshold was determined up to a multiplicative error in a breakthrough-result by Johannson, Kahn and Vu~\cite{johansson} in 2008, as follows.
\begin{theorem}[cf.~\cite{johansson}]\label{thm:johansson}
    For every integer $s \ge 1$ there exists a constant $C=C(s)>0$ such that with $p=p(n)=\frac{C\log n}{n^{s-1}}$ it holds that $\mathcal{H}_s(n,p)$ has a perfect matching w.h.p. provided that $n \equiv 0 \text{ }(\text{mod }s)$.
\end{theorem}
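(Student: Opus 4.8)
} I would not reprove the original self-correcting nibble argument of Johansson, Kahn and Vu, but instead derive the theorem from the ``spread lemma'' machinery of Frankston, Kahn, Narayanan and Park (a consequence of which also follows from the Park--Pham resolution of the Kahn--Kalai conjecture). The plan is to view $\mathcal{H}_s(n,p)$ as a $p$-random subset $X_p$ of the ground set $X=\binom{[n]}{s}$, and to argue that once $p$ exceeds the fractional expectation threshold for containing a perfect matching by the appropriate logarithmic factor, a perfect matching appears w.h.p. Throughout, assume $n\equiv 0\ (\mathrm{mod}\ s)$, set $\ell:=n/s$, and let $\mathcal{M}$ be the set of all perfect matchings of the complete $s$-uniform hypergraph on $[n]$; each $M\in\mathcal{M}$ has exactly $\ell$ hyperedges, and $\mathcal{H}_s(n,p)$ has a perfect matching if and only if $X_p\supseteq M$ for some $M\in\mathcal{M}$.

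\emph{Step 1: a spread measure on perfect matchings.} The claim is that the uniform measure $\mu$ on $\mathcal{M}$ is $q$-spread for a suitable $q=q(n)=\Theta_s\!\left(n^{-(s-1)}\right)$, i.e. $\mu(\{M:S\subseteq M\})\le q^{|S|}$ for every $S\subseteq X$. If $S$ is not a partial matching the left side is $0$. If $S$ is a partial matching with $|S|=j$, then $\mu(\{M:S\subseteq M\})$ equals the ratio of the number of perfect matchings of $K^{(s)}_{n-js}$ to that of $K^{(s)}_{n}$, which telescopes into $j$ factors, the $i$-th ($1\le i\le j$) being the probability that a fixed $s$-set is an edge of a uniform perfect matching of $K^{(s)}_{n-(i-1)s}$, namely $(s-1)!\big/\big((n-(i-1)s-1)\cdots(n-(i-1)s-s+1)\big)$. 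Each such factor is at most $q$ as long as $n-(i-1)s$ is not too small; the last few factors can individually exceed $q$, but a short amortized Stirling estimate over the whole product (already carried out for $s=2$ by Kahn) shows the full product is at most $q^{j}$ once the constant hidden in $q$ is chosen large enough depending on $s$. This verifies the spreadness.

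\emph{Step 2: invoke the spread lemma.} By the Frankston--Kahn--Narayanan--Park theorem there is an absolute constant $K_0$ such that if a nonempty family of subsets of a ground set $X$, each of size at most $\ell$, is $q$-spread, then for $p\ge K_0\,q\log \ell$ the $p$-random subset $X_p$ contains a member of the family with probability $1-o(1)$ as $\ell\to\infty$. Applying this with $\mathcal{M}$, $\ell=n/s$, and $q=\Theta_s(n^{-(s-1)})$ shows that $\mathcal{H}_s(n,p)$ has a perfect matching w.h.p. whenever $p\ge K_0\,q\log(n/s)$. Since $K_0\,q\log(n/s)\le \tfrac{C\log n}{n^{s-1}}$ for a suitable $C=C(s)$ and all large $n$, and since containing a perfect matching is an increasing property of the edge set, the conclusion holds in particular for $p=\tfrac{C\log n}{n^{s-1}}$, as required. (The hypothesis $n\equiv 0\ (\mathrm{mod}\ s)$ enters only to guarantee $\mathcal{M}\neq\emptyset$.)

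\emph{Main obstacle.} Essentially all of the difficulty sits in Step 2: Step 1 is a direct, if slightly delicate, counting computation, whereas the spread lemma --- that beating the fractional expectation threshold by a $\log$ factor forces containment --- is the genuinely hard statement, and would either be quoted as a black box or, in a self-contained treatment, proved via its ``minimum-fragment cover''/encoding argument. If one insisted on a proof avoiding the spread machinery altogether, the route would be the original one: run a random greedy (R\"odl-nibble-type) process building a near-perfect matching, weight its choices by carefully tuned edge fugacities, and show the process self-corrects, controlling the relevant Doob martingales via Azuma/Freedman-type concentration --- an argument that is substantially more involved, which is exactly why I would prefer the spread-lemma route.
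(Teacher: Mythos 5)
The paper never proves Theorem~\ref{thm:johansson}: it is imported verbatim from Johansson, Kahn and Vu as a black box (hence the ``cf.''), so there is no internal argument to compare yours against. Your proposed derivation via the spread lemma is correct and is indeed the standard modern route to this statement. The spreadness check in Step 1 is sound: for a partial matching $S$ with $|S|=j$ the containment probability telescopes into the factors $1/\binom{n-(i-1)s-1}{s-1}$, which are \emph{increasing} in $i$, so their geometric mean over $i\le j$ is minimized at $j=\ell$, where Stirling gives $\mathrm{PM}(n)^{-1/\ell}=(1+o(1))\,(s-1)!\,e^{s-1}\,n^{-(s-1)}$; thus the uniform measure on $\mathcal{M}$ is $q$-spread with $q=O_s(n^{-(s-1)})$, and the Frankston--Kahn--Narayanan--Park theorem applied to the $\ell$-bounded family $\mathcal{M}$ (with $\ell=n/s\to\infty$) produces a perfect matching w.h.p.\ once $p\ge K_0q\log\ell$, which is at most $C\log n/n^{s-1}$ for suitable $C=C(s)$. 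Two remarks on what each approach buys. First, as you yourself note, essentially all of the difficulty in your write-up sits inside the spread lemma, which you would also quote as a black box; so in substance you are trading one deep citation (the original self-correcting nibble argument of~\cite{johansson}) for another (the fragmentation/encoding argument of Frankston, Kahn, Narayanan and Park, or Park--Pham), while making the reduction from Shamir's problem to that black box explicit. That is a legitimate and arguably more transparent presentation, but it is not shorter in any essential sense. Second, for the purposes of this paper only the order of magnitude $\Theta_s(\log n/n^{s-1})$ of the threshold matters, so the unspecified constant $K_0$ coming out of the spread lemma is harmless; the sharp constant $C=(1+o(1))(s-1)!$ mentioned after the theorem statement requires Kahn's later refinement and is not recovered by your argument, nor is it needed.
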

We remark that recently, Kahn~\cite{kahn} has determined the threshold in Shamir's problem even more precisely, showing that taking $C=(1+o(1))(s-1)!$ is sufficient (and best-possible). We now use this probabilistic result to deduce the existence of uniform hypergraphs with special properties, as follows.

\begin{lemma}\label{lemma:randomhyp}
Let $s \ge 2, m \ge 1$ be fixed integers. Then for every sufficiently large integer $n$ such that $n \equiv 1 \text{ }(\text{mod }s)$, there exists an $s$-uniform hypergraph $H$ on $n$ vertices with the following properties.
\begin{enumerate}[label=(\roman*)]
    \item For every $v \in V(H)$, the hypergraph $H-v$ admits a perfect matching.
    \item For every set $F\subseteq E(H)$ of hyperedges with $|F|\le m$, we have $$\left|\bigcup_{e \in F}{e}\right|\ge (s-1)|F|.$$
\end{enumerate}
\end{lemma}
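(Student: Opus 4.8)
The plan is to take $H$ to be a random hypergraph and show that the two properties hold simultaneously with positive probability. Property (i) says that deleting any vertex leaves a perfectly matchable hypergraph; since $n\equiv 1\pmod s$, for each $v$ the hypergraph $H-v$ has $n-1\equiv 0\pmod s$ vertices, so Theorem~\ref{thm:johansson} is applicable in principle. Property (ii) is a local sparsity condition: no $j\le m$ hyperedges can span fewer than $(s-1)j$ vertices. Since $s$ hyperedges could in principle cover as few as... well, a single hyperedge covers $s\ge s-1$ vertices, and the constraint only becomes binding once hyperedges start overlapping a lot, property (ii) is a statement that small families of hyperedges are ``almost disjoint''. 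The tension is that Theorem~\ref{thm:johansson} requires edge-probability around $\frac{C\log n}{n^{s-1}}$, which is fairly large, so we must check that at this density the bad local configurations in (ii) are still unlikely.

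Concretely, I would set $p=p(n)=\frac{C\log n}{n^{s-1}}$ with $C=C(s)$ the constant from Theorem~\ref{thm:johansson}, and let $H_0\sim\mathcal{H}_s(n,p)$. For property (i): for a fixed $v$, $H_0-v$ is distributed as $\mathcal{H}_s(n-1,p)$ conditioned on containing no hyperedge through $v$ — but more simply, $H_0-v$ stochastically dominates $\mathcal{H}_s(n-1,p')$ for $p'$ only negligibly smaller than $p$ (the hyperedges avoiding $v$ are present independently with probability $p$), and since $p=\frac{C\log n}{n^{s-1}}\ge \frac{C'\log(n-1)}{(n-1)^{s-1}}$ for large $n$, Theorem~\ref{thm:johansson} gives that $H_0-v$ has a perfect matching w.h.p.; a union bound over the $n$ choices of $v$ keeps the failure probability $o(1)$. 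For property (ii): for each $2\le j\le m$, the expected number of families of $j$ hyperedges spanning at most $(s-1)j$ vertices (equivalently, at most $(s-1)j-1$, say, but let us just bound by $(s-1)j$) is at most
\begin{equation*}
\sum_{t\le (s-1)j}\binom{n}{t}\binom{\binom{t}{s}}{j}p^j \le \sum_{t\le (s-1)j} n^{t}\cdot n^{sj}\cdot p^j = \sum_{t\le (s-1)j} n^{t+sj}\left(\frac{C\log n}{n^{s-1}}\right)^j,
\end{equation*}
and the exponent of $n$ in the dominant term ($t=(s-1)j$) is $(s-1)j+sj-(s-1)j=sj$, which is positive — so this naive bound does not close. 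The fix is the standard one: if $j$ hyperedges span at most $(s-1)j$ vertices, then by a counting/contractibility argument one may extract a sub-family of $j'\le j$ hyperedges that is ``tight'' in the sense of spanning at most $(s-1)j'$ vertices while being connected, and for a connected family of $j'$ hyperedges the number of spanned vertices is at most $s+(s-1)(j'-1)=(s-1)j'+1$, but more importantly one gains: a connected family of $j'$ $s$-edges on $t$ vertices has $t\le (s-1)j'+1$, and the number of such configurations is at most $n^{t}\cdot (\text{poly in }n\text{ of degree }0\text{ in the leading order})$ once connectivity is exploited, giving expected count $O(n^{(s-1)j'+1}\cdot n^{?}p^{j'})$; choosing the bookkeeping carefully, the exponent becomes $1-j'+o(1)<0$ for $j'\ge 2$. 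I expect the cleanest route is: show that the minimal counterexample to (ii) is a family $F$, $|F|=j'\ge 2$, with $\big|\bigcup_{e\in F}e\big|\le (s-1)j'$ and such that every proper nonempty subfamily $F'\subsetneq F$ satisfies $\big|\bigcup_{e\in F'}e\big|\ge (s-1)|F'|+1$; such a minimal family is connected and one shows it has at most $(s-1)j'-(j'-1)=(s-2)j'+1$ "free" vertices... and then the first-moment bound is $\binom{n}{(s-1)j'}\cdot(\text{number of connected }j'\text{-edge hypergraphs on a labelled vertex set of that size})\cdot p^{j'}\le n^{(s-1)j'}\cdot O(1)\cdot\left(\frac{C\log n}{n^{s-1}}\right)^{j'}=O\!\left((\log n)^{j'}/1\right)$ — which still does not obviously tend to $0$, so one must save an extra factor of $n$ from connectivity (a connected $j'$-edge hypergraph spans at most $(s-1)j'+1$ vertices but if it spans only $(s-1)j'$ then there is a "deficiency" that costs a power of $n$). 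I would phrase this as: a connected family of $j'$ hyperedges spans $t$ vertices with $t\le (s-1)(j'-1)+s=(s-1)j'+1$; if $t\le (s-1)j'$ then the "excess" $e(F):=\sum_{e\in F}|e|-t-\ldots$ forces $t\le (s-1)j'$, and counting connected hypergraphs with a spanning sub-structure loses a factor $n^{(s-1)j'+1-t}\ge n$, so the expected number is $O\!\big(n^{(s-1)j'}\cdot n^{-1}\cdot (\log n)^{j'}/n^{(s-1)j'}\cdot n^{(s-1)j'}\big)$ — I will need to set up the exponents carefully, but the point is that connectivity buys exactly the factor of $n$ needed to beat the $\log$ powers, giving $o(1)$ after summing over $2\le j'\le m$.

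Finally, combining: with probability $1-o(1)$ property (i) holds, and with probability $1-o(1)$ property (ii) holds, so for $n$ large enough both hold simultaneously with positive probability, and hence a hypergraph $H$ with both properties exists. \textbf{The main obstacle} I anticipate is exactly the first-moment computation for property (ii): the edge-probability forced on us by Shamir's problem is large enough that the naive union bound fails, and one genuinely needs to exploit that a minimal violating family must be connected (and in fact ``overfull'') to extract the extra polynomial-in-$n$ saving. I would make sure to define the notion of a minimal violating family, prove it is connected, bound the number of connected $s$-uniform hypergraphs with $j'$ edges and a given small number of vertices by a constant depending only on $s,j'$ (not on $n$), and then check the exponent of $n$ in the resulting expectation is strictly negative.
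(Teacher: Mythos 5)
Your overall strategy — take a random $s$-uniform hypergraph at roughly the Shamir density and show both properties hold with positive probability — is the same as the paper's. However, there are two genuine gaps, one in each half.

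\textbf{Gap in your argument for property (i).} You claim that for each fixed $v$ the theorem gives a perfect matching of $H_0-v$ ``w.h.p.'', and that ``a union bound over the $n$ choices of $v$ keeps the failure probability $o(1)$''. This does not follow: Theorem~\ref{thm:johansson} as stated only provides a failure probability $o(1)$, with no rate, and $o(1)$ is in general not $o(1/n)$. To carry out a union bound over $n$ vertices you need each individual failure probability to be $o(1/n)$, which the black-box statement does not deliver. The paper circumvents this with an amplification trick: it defines $q(n)=\lceil 2\log_2 n\rceil\,p(n-1)$ and observes that $\mathcal{H}_s(n,q)$ stochastically dominates a union of $\ell=\lceil 2\log_2 n\rceil$ independent copies of $\mathcal{H}_s(n,p(n-1))$; since for each such copy the probability that it (minus $v$) fails to have a perfect matching is at most $1/2$ for large $n$, independence gives failure probability $\le 2^{-\ell}\le 1/n^2$, and the union bound over $v$ closes. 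You need this amplification step, or else an explicit quantitative version of Johansson--Kahn--Vu with failure probability $o(1/n)$, which you have not invoked.

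\textbf{Gap in your argument for property (ii).} Here your difficulty is self-inflicted: the ``naive'' first-moment bound actually works, and the elaborate minimal-violating-family/connectivity machinery you sketch (and do not complete) is unnecessary. Your computation goes wrong in two places. First, a violation of (ii) means $f$ hyperedges span at most $(s-1)f-1$ vertices, not at most $(s-1)f$; you explicitly discard the ``$-1$'', but that $-1$ is precisely the factor of $n$ you then spend the rest of the paragraph trying to recover. Second, you bound the number of ways to place $j$ hyperedges inside a $t$-set by $\binom{\binom{t}{s}}{j}\le n^{sj}$, which is absurdly loose: since $t\le (s-1)m-1=O(1)$ is a constant depending only on $s$ and $m$, this quantity is $O(1)$, not a power of $n$. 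With both corrections the expected number of bad configurations for a given $f$ is $\binom{n}{(s-1)f-1}\cdot O(1)\cdot q^f=O\!\left(n^{(s-1)f-1}\cdot\frac{(\log n)^{2f}}{n^{(s-1)f}}\right)=O\!\left(\frac{(\log n)^{2f}}{n}\right)=o(1)$, and summing over $f\le m$ still gives $o(1)$. So the plain union bound suffices, exactly as in the paper; no structural analysis of minimal violating families is needed.
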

\begin{proof}
Let $p(n):=\frac{C\log n}{n^{s-1}}$ be as in the statement of Theorem~\ref{thm:johansson}. Then, for every $n\equiv 1 \text{ }(\text{mod }s)$ chosen large enough, by Theorem~\ref{thm:johansson} we have 
$$\mathbb{P}(\mathcal{H}_s(n-1,p(n-1))\text{ has a perfect matching})\ge \frac{1}{2}.$$ Now, define $q(n):=\lceil2\log_2(n)\rceil p(n-1)=\Theta\left(\frac{\log^2 n}{n^{s-1}}\right)$. In the following, we show that $\mathcal{H}_s(n,q(n))$ satisfies both (i) and (ii) w.h.p. provided $n\equiv 1 \text{ }(\text{mod }s)$, which will then imply the statement of the lemma.

Imagine sampling a random $s$-uniform hypergraph $\tilde{H}$ on vertex-set $[n]$ as the union of $l:=\lceil 2\log_2(n)\rceil$ independently generated instances of $\mathcal{H}_s(n,p(n-1))$, which we call $H_1,\ldots,H_l$. Note that the distribution of the random hypergraph $\tilde{H}=H_1\cup \dots \cup H_l$ follows that of a binomial random hypergraph $\mathcal{H}_s(n,q'(n))$ with edge-probability $q'(n)=1-(1-p(n-1))^l=1-(1-p(n-1))^{\lceil2\log_2(n)\rceil}\le q(n)$. Now fix a vertex $v \in [n]$. From the above we have, since the property of having a perfect matching is monotone,
\begin{align*}
 &\mathbb{P}(\mathcal{H}_s(n,q(n))-v \text{ has no perfect matching}) \\  \le \: & \mathbb{P}(\mathcal{H}_s(n,q'(n))-v \text{ has no perfect matching}) \\ = \: & \mathbb{P}(\tilde{H}-v \text{ has no perfect matching})\\ \le \: & \prod_{i=1}^{l}{\mathbb{P}(H_i-v\text{ has no perfect matching})}.   
\end{align*}
Since for every $i$ the distribution of $H_i-v$ follows that of an $\mathcal{H}_s(n-1,p(n-1))$, from the above we have that $\mathbb{P}(H_i-v\text{ has no perfect matching})\le \frac{1}{2}$ for $i=1,\ldots,l$. Altogether, it follows that
$$\mathbb{P}(\mathcal{H}_s(n,q(n))-v \text{ has no perfect matching})\le \left(\frac{1}{2}\right)^{2\log_2(n)}=\frac{1}{n^2}.$$ Using a union bound over all choices of $v$, this implies that 
$$\mathbb{P}\left(\bigcup_{v \in [n]}\{\mathcal{H}_s(n,q(n))-v \text{ has no perfect matching}\}\right)\le\frac{n}{n^2}=\frac{1}{n}.$$ Thus, w.h.p. $\mathcal{H}_s(n,q(n))$ satisfies property (i). 

Let us now move on to property (ii). For that purpose, we want to show that w.h.p. for every number $f=1,\ldots,m$, no subset of $[n]$ of size $(s-1)f-1$ contains $f$ hyperedges from $\mathcal{H}_s(n,q(n))$. Let $T(s,f)$ denote the number of labelled hypergraphs on $(s-1)f-1$ vertices containing $f$ hyperedges. Using a simple union bound over all choices of subsets of $[n]$ of size $(s-1)f-1$ and the possible configurations of edges on those subsets, we obtain that the probability that there exist $f$ hyperedges in $\mathcal{H}_s(n,q(n))$ spanning less than $(s-1)f$ vertices is at most
$$\binom{n}{(s-1)f-1}\cdot T(s,f) \cdot q(n)^f=O\left(n^{(s-1)f-1}\cdot \left(\frac{\log^2 n}{n^{s-1}}\right)^f\right)=O\left(\frac{\log^{2f}n}{n}\right).$$ Thus, w.h.p. we have that $\mathcal{H}_s(n,q(n))$ also satisfies item (ii) of the lemma. This concludes the proof. 
\end{proof}

Next, would like to use the hypergraphs from the previous lemma to construct graphs that satisfy the conditions of Theorem~\ref{thm:main}. To do so, we first need to prove a technical result about the number of edges that can be spanned by any $(s+1)$-subset of vertices in the $2$-section of these hypergraphs, namely Lemma~\ref{lemma:edgebound}. To prove Lemma~\ref{lemma:edgebound}, we first establish an auxiliary result on hypergraphs in the form of Lemma~\ref{lemma:blocks}, which in turn needs the following elementary but important observation.
\begin{observation}\label{obs:connectedbound}
Let $H=(V,E)$ be a connected hypergraph (that is, $G_2^H$ is connected). Then 
$$|V|\le 1+\sum_{e\in E}{(|e|-1)}.$$
\end{observation}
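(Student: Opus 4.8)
The plan is to exploit the fact that the $2$-section $G_2^H$ is by hypothesis a connected graph on the vertex set $V$, so it contains a spanning tree $T$ with exactly $|V|-1$ edges. (If $|V|\le 1$ the inequality is trivial, so assume $|V|\ge 2$; then $G_2^H$ has at least one edge, and in particular $E\neq\emptyset$.) The idea is then to charge each edge of $T$ to a hyperedge of $H$ that ``witnesses'' it, and to argue that no single hyperedge gets charged more than $|e|-1$ times.

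Concretely, for every edge $uv\in E(T)\subseteq E(G_2^H)$, the definition of the $2$-section guarantees a hyperedge $e\in E$ with $u,v\in e$; fix one such choice and call it $\phi(uv)$. This defines a map $\phi\colon E(T)\to E$. For a fixed hyperedge $e\in E$, every edge in the preimage $\phi^{-1}(e)$ has both of its endpoints inside the set $e$, and these edges form a subgraph of the tree $T$, hence a forest, supported on a vertex set of size at most $|e|$. Since a forest on at most $|e|$ vertices has at most $|e|-1$ edges, we get $|\phi^{-1}(e)|\le |e|-1$.

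Summing this bound over all hyperedges and using that the sets $\phi^{-1}(e)$ partition $E(T)$ then yields
$$|V|-1 = |E(T)| = \sum_{e\in E}|\phi^{-1}(e)| \le \sum_{e\in E}(|e|-1),$$
which is exactly the claimed inequality. I do not anticipate any genuine obstacle here; the only points that deserve a moment's care are the degenerate case $|V|\le 1$ (equivalently $E=\emptyset$), dispatched above, and the elementary fact that a subforest of $T$ lying entirely within the vertex set $e$ has at most $|e|-1$ edges. An essentially equivalent alternative would be to order the hyperedges $e_1,\dots,e_t$ so that each $e_i$ with $i\ge 2$ meets $e_1\cup\dots\cup e_{i-1}$ --- such an order exists because $G_2^H$ is connected and (in the non-degenerate case) every vertex of $V$ lies in some hyperedge --- and then telescope the bound $|e_1\cup\dots\cup e_i|\le |e_1\cup\dots\cup e_{i-1}|+(|e_i|-1)$; but the spanning-tree argument is the cleanest route.
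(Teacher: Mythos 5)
Your proof is correct and is essentially the same as the paper's: both take a spanning tree $T$ of $G_2^H$, assign each tree edge to a covering hyperedge, observe that the edges assigned to any fixed $e$ form a forest on a vertex set contained in $e$, and sum the bound $|e|-1$ over all hyperedges. The only cosmetic difference is that the paper writes $\max\{0,|V(T_e)|-1\}$ to handle hyperedges receiving no tree edges, which you absorb into the same forest-counting observation.
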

\begin{proof}
Let $T$ be a spanning tree of $G_2^H$. For every edge $t \in E(T)$, assign a hyperedge $e(t) \in E$ such that $t \subseteq e(t)$. For each $e \in E$, let $T_e\subseteq T$ be the forest induced by the edges $\{t \in E(T)|e(t)=e\}$. Clearly, $V(T_e)\subseteq e$ for every $e \in E$, and thus
$$|V|-1=|E(T)|=\sum_{e \in E}{|E(T_e)|}\le \sum_{e\in E}{\max\{0,|V(T_e)|-1\}}\le \sum_{e \in E}{(|e|-1)},$$ as desired.
\end{proof}
\begin{lemma}\label{lemma:blocks}
Let $H=(V,E)$ be a hypergraph with $|V|\ge 4$ and $V \notin E$. Suppose further that
for every set $F \subseteq E$ of hyperedges, we have 
$$\left|\bigcup_{e\in F}{e}\right|\ge \sum_{e\in F}{(|e|-1)}.$$
Then there exists a set $W\subseteq V$ of size at most $2$ such that $G_2^H-W$ is disconnected. 
\end{lemma}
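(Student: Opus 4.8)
The plan is to argue by contradiction: in addition to the stated hypotheses, assume that $G_2^H-W$ is connected for every $W\subseteq V$ with $|W|\le 2$, and derive a contradiction. First some harmless reductions: deleting singleton hyperedges changes neither $G_2^H$ nor the hypothesis, so assume there are none; and an isolated vertex of $H$ would make $G_2^H$ disconnected (take $W=\emptyset$), so assume $H$ has no isolated vertices. Then $\bigcup_{e\in E(H)}e=V$, and the case $F=E(H)$ of the hypothesis gives $\sum_{e\in E(H)}(|e|-1)\le |V|$; together with Observation~\ref{obs:connectedbound} applied to the (connected) hypergraph $H$ this even pins the sum to $|V|-1$ or $|V|$, but only the upper bound will be needed.

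The engine of the argument is to recognize the hypothesis as a Hall condition. Form the bipartite graph between \emph{tokens} $\{(e,j):e\in E(H),\,1\le j\le|e|-1\}$ and $V$, joining $(e,j)$ to every vertex of $e$. The subsets of tokens that are tightest for Hall's condition are the unions of all tokens of a subfamily $F\subseteq E(H)$, and there the condition reads precisely $\bigl|\bigcup_{e\in F}e\bigr|\ge\sum_{e\in F}(|e|-1)$; hence a matching saturating all tokens exists. This gives, for each hyperedge $e$, a \emph{root} $r_e\in e$ such that the sets $\phi(e):=e\setminus\{r_e\}$ are pairwise disjoint; let $R$ denote the set of all roots. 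Pairwise disjointness forces every non-root vertex to lie in exactly one hyperedge, so its $G_2^H$-degree equals $|e|-1$ for that hyperedge. Now two local consequences of the contradiction hypothesis pin things down: (a) no vertex $v$ has $\deg_{G_2^H}(v)\le 2$, since then $W=N_{G_2^H}(v)$ would isolate $v$ in a graph on $\ge|V|-2\ge 2$ vertices; in particular every non-root vertex lies in a hyperedge of size $\ge 4$. And (b) no hyperedge $e_0$ with $|e_0|\ge 3$ has boundary $B(e_0):=\{v\in e_0: v\text{ has a neighbour in }V\setminus e_0\}$ of size $\le 2$, since then $W=B(e_0)$ separates the nonempty set $e_0\setminus B(e_0)$ from $V\setminus e_0$, which is nonempty because $V\notin E(H)$. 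The key interplay is that a vertex of $\phi(e_0)$ that is not a root lies only in $e_0$ and hence has no neighbour outside $e_0$; therefore $B(e_0)\subseteq\{r_{e_0}\}\cup(\phi(e_0)\cap R)$.

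It remains to exhibit a hyperedge of size $\ge 3$ with boundary of size $\le 2$, contradicting (b); this is the crux and is a global count. By (a), every non-root vertex lies in a unique hyperedge of size $\ge 4$, so $|V\setminus R|=\sum_{e:\,|e|\ge 4}\bigl((|e|-1)-|\phi(e)\cap R|\bigr)$, while $|R|\le|E(H)|$ since each hyperedge has exactly one root. Plugging these into $\sum_{e}(|e|-1)\le|V|$ — split according to whether $|e|\le 3$ or $|e|\ge 4$, using $|e|-1\ge 1$ for the small ones — yields $\sum_{e:\,|e|\ge 4}|\phi(e)\cap R|\le\#\{e:|e|\ge 4\}$. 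If a hyperedge of size $\ge 4$ exists, averaging produces one, $e_0$, with $|\phi(e_0)\cap R|\le 1$, whence $|B(e_0)|\le 2$: contradiction. The two remaining cases are quickly dispatched: if no hyperedge has size $\ge 4$ but one has size $3$, then (a) leaves no non-root vertices, so $R=V$ and $|V|\le|E(H)|<\sum_e(|e|-1)\le|V|$, absurd; and if all hyperedges have size $2$, the hypothesis is exactly the statement that $G_2^H$ is a pseudoforest, and a connected pseudoforest on $\ge 4$ vertices has a vertex of degree $\le 2$, contradicting (a).

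The conceptual heart is the observation that the lemma's hypothesis is a Hall condition producing an almost-partition of $V$ into the complements of roots; everything local then falls out easily. The step where care is genuinely needed — and which I expect to be the main obstacle — is the final count: correctly classifying which vertices must be roots versus free, and arranging the inequalities $\sum_e(|e|-1)\le|V|$ and $|R|\le|E(H)|$ so that they isolate a single offending hyperedge of size $\ge 4$.
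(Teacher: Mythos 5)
Your proof is correct, and it takes a genuinely different route from the paper. The paper argues directly: if some hyperedge $e_0$ has $|e_0|\ge 3$, it picks $v\notin e_0$, looks at the component $C$ of $G_2^{H-e_0}$ containing $v$, applies Observation~\ref{obs:connectedbound} to the hyperedges inside $C$ and the hypothesis to $F\cup\{e_0\}$ to squeeze out $|e_0\cap C|\le 2$, and takes $W=e_0\cap C$; when all edges have size $\le 2$ it reduces to the observation that a $2$-regular graph on $\ge 4$ vertices has a $2$-cut. Your argument instead reads the hypothesis as a Hall condition on the token/vertex bipartite graph (and you correctly note that the only sets one needs to test are the ``full'' token sets of a subfamily $F$), extracts a system of pairwise disjoint sets $\phi(e)=e\setminus\{r_e\}$, deduces two local constraints from the assumed $3$-connectivity of $G_2^H$ (minimum degree $\ge 3$; no edge $e_0$ of size $\ge 3$ with boundary $|B(e_0)|\le 2$), and then runs a global count using $\sum_e(|e|-1)\le |V|$ and $|R|\le |E|$ to isolate an offending $e_0$ with $|\phi(e_0)\cap R|\le 1$. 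I verified the arithmetic of the averaging step and the two degenerate cases (some $|e|=3$ but none $\ge 4$; all $|e|=2$). What your approach buys is a clean conceptual reading of the hypothesis as a matching/defect condition, and it reproves the cut-set statement without ever exhibiting the cut explicitly until the very end; what it costs is a longer chain of small lemmas and the need for the extra $|R|\le|E|$ bookkeeping, whereas the paper's argument produces $W$ in two short inequalities from Observation~\ref{obs:connectedbound}. Both are sound.
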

\begin{proof}
Suppose first that there exists at least one hyperedge $e_0 \in E$ with $|e_0|\ge 3$. By assumption, $V \notin E$, and thus there exists some vertex $v \in V\setminus e_0$. Let us now consider the graph $G=G_2^{H-e_0}$, the $2$-section of the hypergraph $H-e_0$ obtained from $H$ by deleting $e_0$. Let $C$ be the vertex-set of the unique connected component of $G$ that contains $v$. We claim that $|C \cap e_0|\le 2$. To that end, define $F$ as the set of hyperedges of $H$ that are contained in $C$. Clearly, $e_0\notin F$, since $v \in C$ and $v\notin e_0$. Note that, since every hyperedge $e\in E\setminus\{e_0\}$ induces a clique in $G$, we have that $\bigcup_{e\in F}{e}=C$ and that the hypergraph $H'=(C,F)$ is connected. These facts imply via Observation~\ref{obs:connectedbound} that
$$\left|\bigcup_{e\in F}{e}\right|=|C|\le 1+\sum_{e \in F}{(|e|-1)}.$$
On the other hand, by applying the assumption of the lemma to the edge-set $F \cup \{e_0\}$, we find 
$$\sum_{e \in F\cup \{e_0\}}{(|e|-1)}\le \left|e_0 \cup \bigcup_{e\in F}{e}\right|=|e_0 \cup C|=|e_0|+|C|-|e_0 \cap C|.$$
Subtracting $(|e_0|-1)$ from both sides yields 
$$\sum_{e\in F}{(|e|-1)}\le |C|+1-|e_0 \cap C|.$$
Plugging the above into the first inequality we get
$|C|\le |C|+2-|e_0 \cap C|,$ and thus $|e_0 \cap C|\le 2$, as claimed. We now set $W:=e_0\cap C$ and claim that $G_2^H-W$ is disconnected. Indeed, it follows readily from the definition of $C$ that no edge in $G_2^H-W$ connects a vertex in $C\setminus W=C\setminus e_0$ to a vertex in $V\setminus C$. Further, since $v \in C\setminus e_0$ we have that the first set is non-empty, and since $|V\setminus C|\ge |e_0\setminus C|=|e_0|-|e_0\cap C|\ge 3-2=1>0$, the second set is also non-empty. Thus, $G_2^H-W$ is indeed disconnected, which concludes the proof in this case.

For the second case, assume that $|e|\le 2$ for every $e \in E$. W.l.o.g. (since they do not have an effect on $G_2^H$) we may assume that $H$ contains no hyperedges of size $1$, i.e., $H$ is a graph and $G_2^H=H$. If $H$ has a vertex of degree at most $1$, then the statement of the lemma trivially holds, so suppose that $H$ has minimum degree at least $2$. The condition of the lemma now yields $|E|=\sum_{e\in E}{(|e|-1)}\le |\bigcup_{e\in E}{e}|\le |V|$. This directly implies via the handshake-lemma that $H$ is a $2$-regular graph. It is trivial to see that every such graph on at least $4$ vertices contains a cut-set $W$ consisting of at most $2$ vertices, and this concludes the proof. 
\end{proof}

\begin{lemma}\label{lemma:edgebound}
Let $s \ge 3$ be an integer, let $H$ be an $s$-uniform hypergraph such that $\left|\bigcup_{e \in F}{e}\right| \ge (s-1)|F|$ holds for all $F \subseteq E(H)$ with $|F|< 2^{s+1}$. Let $G$ be the $2$-section of $H$. Then, for every set $X \subseteq V(G)$ of size $s+1$, it holds that $|E(G[X])|\le \binom{s}{2}+2$. 
\end{lemma}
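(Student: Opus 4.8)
The plan is to pass to the ``trace'' of $H$ on $X$ and then invoke Lemma~\ref{lemma:blocks}. Define the hypergraph $H'=(X,\mathcal{E}')$ on vertex set $X$ by
$$\mathcal{E}':=\{\,e\cap X\ :\ e\in E(H),\ |e\cap X|\ge 2\,\}.$$
Since each hyperedge of $H$ meeting $X$ in at least two vertices induces a clique on $e\cap X$ in $G[X]$, and every edge of $G[X]$ arises in this way, the $2$-section of $H'$ is exactly $G[X]$; hence it suffices to bound $|E(G_2^{H'})|$. Observe moreover that each member of $\mathcal{E}'$ has size at most $|e|=s<s+1=|X|$, so $X\notin E(H')$, and that $|V(H')|=s+1\ge 4$ because $s\ge 3$. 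These are two of the three hypotheses of Lemma~\ref{lemma:blocks}.

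The key step is to verify the remaining hypothesis, namely that $\left|\bigcup_{e'\in F'}e'\right|\ge \sum_{e'\in F'}(|e'|-1)$ for every $F'\subseteq\mathcal{E}'$. For each $e'\in\mathcal{E}'$ fix a hyperedge $\phi(e')\in E(H)$ with $\phi(e')\cap X=e'$; the map $\phi$ is injective since $e'$ can be recovered from $\phi(e')$. Given $F'\subseteq\mathcal{E}'$, set $F:=\phi(F')\subseteq E(H)$, so that $|F|=|F'|$. As $\mathcal{E}'$ is a family of subsets of the $(s+1)$-element set $X$, we have $|F|<2^{s+1}$, so the hypothesis on $H$ applies and gives $\left|\bigcup_{e\in F}e\right|\ge (s-1)|F|$. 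On the other hand, $\bigcup_{e\in F}e$ meets $X$ in exactly $\bigcup_{e'\in F'}e'$, while it has at most $\sum_{e'\in F'}(s-|e'|)$ vertices outside $X$ (each $\phi(e')$ contributes $s-|e'|$ of them). Subtracting, one obtains
$$\left|\bigcup_{e'\in F'}e'\right|\ \ge\ (s-1)|F'|-\sum_{e'\in F'}(s-|e'|)\ =\ \sum_{e'\in F'}(|e'|-1),$$
as required. (If $\mathcal{E}'=\emptyset$ there is nothing to check and everything below is trivial.)

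Applying Lemma~\ref{lemma:blocks} to $H'$ now yields a set $W\subseteq X$ with $|W|\le 2$ such that $G[X]-W$ is disconnected. To finish, bound $|E(G[X])|$ by splitting the edges into those avoiding $W$ and those meeting $W$: a disconnected graph on $s+1-|W|$ vertices has at most $\binom{s-|W|}{2}$ edges, and at most $\binom{|W|}{2}+|W|(s+1-|W|)$ edges of $G[X]$ meet $W$. Running the three cases $|W|=0,1,2$ separately, and using identities such as $\binom{s}{2}-\binom{s-2}{2}=2s-3$, these estimates give the respective upper bounds $\binom{s}{2}$, $\binom{s}{2}+1$ and $\binom{s}{2}+2$ on $|E(G[X])|$, so in every case $|E(G[X])|\le \binom{s}{2}+2$.

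I expect the delicate point to be the sparsity transfer in the second paragraph: one must ensure that the chosen preimages $\phi(e')$ are pairwise distinct (so that $|F|=|F'|$ and so that the bound ``at most $\sum_{e'\in F'}(s-|e'|)$ vertices outside $X$'' is valid), and that the number of selected hyperedges stays below the threshold $2^{s+1}$ under which the hypothesis on $H$ is available. The identification of $G_2^{H'}$ with $G[X]$, the checking of the two easy hypotheses of Lemma~\ref{lemma:blocks}, and the final three-case count are all routine.
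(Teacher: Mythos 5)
Your proof is correct and follows essentially the same route as the paper: pass to the trace $H_X$ of $H$ on $X$, verify that the sparsity hypothesis transfers (your explicit injective map $\phi$ makes rigorous a step the paper treats as immediate), apply Lemma~\ref{lemma:blocks}, and count edges. The only cosmetic difference is in the final step, where the paper avoids the case analysis on $|W|$ by writing $X\setminus W = A \cup B$ with no edges between $A$ and $B$ and observing $|A||B| \ge s-2$, giving $|E(G[X])| \le \binom{s+1}{2} - (s-2) = \binom{s}{2}+2$ in one line.
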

\begin{proof}
Let $H_X$ denote the hypergraph obtained by \emph{restricting} $H$ to $X$, that is, $V(H_X):=X$ and $E(H_X)=\{e \cap X|e \in E(H), |e\cap X|\ge 2\}$. Note that the $2$-section of $H_X$ equals $G[X]$. Further note that for every subset $F \subseteq E(H)$ of size less than $2^{s+1}$, it holds that 
$$\left|\bigcup_{e \in F}{(e\cap X)}\right|\ge \left|\bigcup_{e \in F}{e}\right|-\sum_{e \in F}{|e\setminus X|}$$ $$\ge (s-1)|F|-\sum_{e \in F}{|e\setminus X|}=\sum_{e \in F}{(s-1-|e\setminus X|)}=\sum_{e \in F}{(|e\cap X|-1)}.$$
This directly implies that $\left|\bigcup_{e \in F}{e}\right|\ge \sum_{e \in F}{(|e|-1)}$ for every subset $F \subseteq E(H_X)$.
We can therefore apply Lemma~\ref{lemma:blocks}, which implies that there exists a set $W\subseteq X$ of size at most $2$ such that $G[X]-W$ is disconnected. Thus, there exist disjoint non-empty sets $A, B$ such that $A\cup B=X\setminus W$ and no edge in $G[X]$ connects $A$ and $B$. Note that as $|A|,|B|\ge 1$ and $|A|+|B|=|X|-|W|\ge (s+1)-2=s-1$, we have $|A||B|\ge s-2$. We conclude that 
$$|E(G[X])|\le \binom{s+1}{2}-|A||B|\le \binom{s+1}{2}-(s-2)=\binom{s}{2}+2.$$ This concludes the proof.
\end{proof}

\begin{proof}[Proof of Theorem~\ref{thm:main}]
Let an integer $r \ge 1$ be given. Define $s:=r+3$ and $m:=2^{s+1}$. By Lemma~\ref{lemma:randomhyp} there exists some $n_0 \in \mathbb{N}$ such that for every integer $n \ge n_0$ with $n \equiv 1 \text{ }(\text{mod }s)$, there exists an $s$-uniform hypergraph $H$ on $n$ vertices with the following properties.
\begin{itemize}
    \item For every $v \in V(H)$, the hypergraph $H-v$ admits a perfect matching.
    \item For every set $F\subseteq E(H)$ of hyperedges with $|F|\le m=2^{s+1}$, we have $$\left|\bigcup_{e \in F}{e}\right|\ge (s-1)|F|.$$
\end{itemize}
Define $k_0:=\lceil\frac{n_0-1}{s}\rceil+1$ and let $k \ge k_0$ be any given integer. Let $H$ be an $s$-uniform hypergraph on $n:=s(k-1)+1\ge n_0$ vertices satisfying the properties above. Finally, we define a graph $G$ as the complement of the $2$-section $G_2^H$ of $H$. We claim that it satisfies the properties required by the theorem, that is,

\begin{itemize}
    \item $G-v$ is $(k-1)$-colorable for every $v \in V(G)$, and
    \item for every set $R \subseteq E(G)$ of edges with $|R|\le r$, we have $\chi(G-R)\ge k$. 
\end{itemize}

To verify the first statement, consider any vertex $v$ and a perfect matching of $H-v$. Since $H$ is $s$-uniform, the perfect matching forms a partition of $V(H)\setminus\{v\}=V(G)\setminus\{v\}$ into $\frac{n-1}{s}=k-1$ sets, each inducing a hyperedge in $H$ and thus an independent set in $G$. Hence we have $\chi(G-v)\le k-1$.

Now let $R \subseteq E(G)$ with $|R|\le r$ be given. We claim that $\alpha(G-R)\le s$, i.e., that there exists no independent set in $G-R$ of size $s+1$, which will then imply $\chi(G-R)\ge \frac{n}{\alpha(G-R)}\ge \frac{n}{s}>k-1$, as desired. Suppose towards a contradiction that there is some $X\subseteq V(G)$ of size $s+1$ that is independent in $G-R$. Then $G[X]$ contains at most $r$ edges, and thus its complement graph, namely $G_2^H[X]$, contains at least $\binom{s+1}{2}-r=\binom{s}{2}+s-r=\binom{s}{2}+3$ edges. However, by Lemma~\ref{lemma:edgebound} applied to $H$, we find that $|E(G_2^H[X])|\le \binom{s}{2}+2$, a contradiction. This shows that indeed, $\alpha(G-R)\le s$ for every $R \subseteq E(G)$ with $|R|\le r$, concluding the proof.  
\end{proof}

\end{document}